\newtheorem{theorem}{Theorem}
\newtheorem{definition}[theorem]{Definition}
\newtheorem{lemma}{Lemma}
\newenvironment{proof}[1][Proof]{\textbf{#1.} }{\ \rule{0.5em}{0.5em}}
\numberwithin{equation}{section} \numberwithin{theorem}{section}
\numberwithin{corollary}{section} \numberwithin{lemma}{section}
\numberwithin{remark}{section} \numberwithin{notation}{section}
\begin{document}

\title{Perfect Morse Function on $SO(n)$}

\author{Mehmet Solgun \\
Bilecik Seyh Edebali University, Faculty of Sciences and Arts\\
 Department of Mathematics, Bilecik, TURKEY\\
 mehmet.solgun@bilecik.edu.tr}

\maketitle

\begin{abstract}
In this work, we define a Morse function on $SO(n)$ and show that this function is indeed a perfect Morse function.
\end{abstract}

Keywords: $SO(n)$, Morse functions, Perfect Morse functions.
Mathematics Subject Classification 2010:57R70, 58E05

\section{Introduction}

The main point of Morse Theory, which was introduced in \cite{Morse1928}, is investigating the relation between shape of a smooth manifold $M$ and critical points of a specific real-valued function $f:M\rightarrow \mathbb{R}$, that is called \textit{Morse function.} \cite{Milnor1963} and \cite{Matsumoto2002} are two of main sources about this subject, so mostly we will use their beautiful tools for defining a Morse function on $SO(n)$. Also, we will refer \cite{Hatcher2002} to use homological properties   and to determine the Poincar\'e polynomial of $SO(n)$. Perfect Morse functions are widely studied in \cite{Nicolaescu2011}, that is one of our inspiration to show that the function, we defined, is also perfect.

\section{Preliminaries}
In this section, we give some definitions and theorems which will be used in this paper.

\begin{definition} Let $M$ be an $n$-dimensional  smooth manifold and $f:M\rightarrow\mathbb{R}$ be a smooth function. A point $p_0 \in M$ is said to be \textit{a critical point of $M$} if we have
\begin{equation}
\frac{\partial f}{\partial x_1}=0, \: \frac{\partial f}{\partial x_2}=0,\:...,\:  \frac{\partial f}{\partial x_n}=0
\end{equation}
with respect to a coordinate system $\lbrace x_1,x_2,...,x_n\rbrace$ around $p_0$.
\end{definition}

A point $c\in \mathbb{R}$ is said to be  \textit{a critical value
 of} $f:M\rightarrow\mathbb{R}$, if $f(p_0)=c$ for a critical point $p_0$ of $f$.
\begin{definition}
Let $p_0$ be a critical point of the function $f:M\rightarrow \mathbb{R}$. The \textit{Hessian of $f$ at he point $p_0$} is the $n\times n$ matrix
\begin{equation}
H_f(p_0)=\begin{bmatrix}
\frac{ \partial^2f}{\partial{x_1}^2}(p_0) & \cdots & \frac{\partial^2 f}{\partial x_1  \partial x_n}(p_0) \\
\vdots                                    & \ddots & \vdots                                               \\
\frac{\partial^2 f}{\partial x_n  \partial x_1}(p_0) & \cdots &  \frac{ \partial^2f}{\partial{x_n}^2} \: (p_0)
\end{bmatrix}
\end{equation}
\end{definition}
Since $\frac{\partial ^2 f}{\partial x_i \partial x_j}(p_0)=\frac{\partial ^2 f}{\partial x_j \partial x_i}(p_0)$, the Hessian of $f$ is a symmetric matrix.

Let $p_0$ be a critical point of $f$ and $c_0 \in \mathbb{R}$ such that $f(p_0)=c_0$. Then, $c_0$ is said to be \textit{a critical value of $f$}. If $p_0$ is a regular point of $f$, then $c_0$ is said to be a \textit{regular value of $f$}.

If $a$ is a regular value of $f$, it can be shown that the set $f^{-1}(a)=\lbrace p\in M| f(p)=a \rbrace$ is an $n-1$ dimensional manifold \cite{Gauld1982}.

\begin{definition}
A critical point of a function $f:M\rightarrow\mathbb{R}$ is called "non-degenerate point of $f$" if $det H_f(p_0)\neq 0$. Otherwise, it is called "degenerate critical point".
\end{definition}

\begin{lemma}
Let $p_0$ be a critical point of a smooth function $f:M \rightarrow \mathbb{R}$, $(U,\varphi=(x_1,...,x_n))$, $(V,\psi=(X_1,...,X_n))$ be two charts of $p_0$, and  $ H_f(p_0),\mathcal{H}_f(p_0)$ be the Hessians of $f$ at $p_0$, using the charts $(U,\varphi), (V,\psi)$ respectively. Then the following holds:
\begin{equation}
 \mathcal{H}_f(p_0)=J(p_0)^tH_f(p_0)J(p_0)
\end{equation}
where $J(p_0)$ is the Jacobian matrix for the given coordinate transformation, defined by
\begin{equation}
J(p_0)= \begin{bmatrix}
\frac{\partial x_1}{\partial X_1}(p_0) & \cdots & \frac{\partial x_1}{\partial X_n}(p_0) \\
\vdots                          & \ddots & \vdots \\
\frac{\partial x_n}{\partial X_1}(p_0) & \cdots & \frac{\partial x_n}{\partial X_n}(p_0)
\end{bmatrix}
\end{equation}
and the matrix $J(p_0)^t$ is the transpose of $J(p_0)$.
\end{lemma}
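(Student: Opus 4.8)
The plan is to compute the second partial derivatives of $f$ in the $(X_1,\dots,X_n)$ chart directly in terms of those in the $(x_1,\dots,x_n)$ chart by applying the chain rule twice, and then to exploit the hypothesis that $p_0$ is a critical point in order to discard the unwanted terms. First I would write, by the chain rule applied to $f$ regarded as a function of the $X_i$ through the transition functions $x_k = x_k(X_1,\dots,X_n)$,
\[
\frac{\partial f}{\partial X_i} = \sum_{k=1}^n \frac{\partial f}{\partial x_k}\,\frac{\partial x_k}{\partial X_i}.
\]

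Next I would differentiate this identity once more with respect to $X_j$ and apply the product rule. This produces two groups of terms: one in which the chain rule acts again on $\partial f/\partial x_k$ to generate the second derivatives $\partial^2 f/\partial x_l\partial x_k$, and one in which the derivative lands on the factor $\partial x_k/\partial X_i$, producing second derivatives of the transition map. Explicitly,
\[
\frac{\partial^2 f}{\partial X_j \partial X_i} = \sum_{k,l} \frac{\partial^2 f}{\partial x_l \partial x_k}\,\frac{\partial x_l}{\partial X_j}\,\frac{\partial x_k}{\partial X_i} + \sum_{k} \frac{\partial f}{\partial x_k}\,\frac{\partial^2 x_k}{\partial X_j \partial X_i}.
\]

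The decisive observation, and the only place the hypothesis is used, is that at the critical point $p_0$ every first partial derivative $\partial f/\partial x_k(p_0)$ vanishes by the definition of a critical point, so the entire second sum disappears upon evaluation at $p_0$. What remains is precisely a double sum that I would identify as the $(i,j)$ entry of the matrix product $J(p_0)^t H_f(p_0) J(p_0)$: matching $J_{ki}=\partial x_k/\partial X_i$ so that $(J^t)_{ik}=\partial x_k/\partial X_i$, the entry $\bigl(J^t H_f J\bigr)_{ij}=\sum_{k,l}\frac{\partial x_k}{\partial X_i}\frac{\partial^2 f}{\partial x_k\partial x_l}\frac{\partial x_l}{\partial X_j}$ agrees with $\mathcal{H}_f(p_0)_{ij}=\partial^2 f/\partial X_i\partial X_j(p_0)$ after invoking the symmetry of the Hessian. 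Since this holds for every $i,j$, it establishes the claimed matrix identity entrywise.

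I expect no genuine obstacle here, only careful bookkeeping: keeping the two summation indices $k,l$ straight and confirming that the transpose is placed on the correct factor so that the index pattern of $J^t H_f J$ lines up with that of $\partial^2 f/\partial X_i\partial X_j$. Conceptually the whole argument hinges on the vanishing of the first-order term at $p_0$, which is exactly the reason the Hessian transforms as a well-defined bilinear form (by congruence, rather than by an arbitrary change of basis) only at critical points.
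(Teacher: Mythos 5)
Your proof is correct, and it is the standard argument: apply the chain rule twice, observe that the first-order term $\sum_k \frac{\partial f}{\partial x_k}(p_0)\frac{\partial^2 x_k}{\partial X_j \partial X_i}(p_0)$ vanishes precisely because $p_0$ is a critical point, and match indices with the paper's convention $J_{ki}=\frac{\partial x_k}{\partial X_i}$ to read off the congruence $\mathcal{H}_f(p_0)=J(p_0)^t H_f(p_0)J(p_0)$. The paper itself states this lemma without proof (deferring to its references), so your argument supplies exactly the computation being taken for granted, and your closing remark correctly identifies the vanishing of the first-order term as the reason the Hessian is well defined as a bilinear form only at critical points.
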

For a critical point $p_0$, non-degeneracy does not depend on the choice of charts around $p_0$. The same argument is also true for degenerate critical points. In fact we have
\begin{equation*}
 \mathcal{H}_f(p_0)=J(p_0)^tH_f(p_0)J(p_0)
\end{equation*}
by the previous lemma,and hence
\begin{equation}
det \mathcal{H}_f(p_0)=det J(p_0)^t det H_f(p_0) det J(p_0)
\end{equation}
by using determinant function on both sides. On the other hand, the determinant of the Jacobian matrix is non-zero. So the statement "$det \mathcal{H}_f(p_0)\neq0$" and "$ det H_f(p_0)\neq0$" are equivalent. In other words,
\begin{equation*}
det \mathcal{H}_f(p_0)\neq0 \Leftrightarrow det H_f(p_0)\neq0.
\end{equation*}
Now a function $f:M\rightarrow\mathbb{R}$ is called a \textit{Morse function} if any critical point
of f is non-degenerate. From now on, we only consider a Morse function $f$.

Now, we introduce Morse lemma on manifolds.

\begin{theorem}{(The Morse Lemma)}
Let $M$ be an $n$-dimensional smooth manifold and $p_0$ be a non-degenerate critical point of a Morse function $f:M\rightarrow\mathbb{R}$. Then, there exists a local coordinate system $(X_1,X_2,...,X_n)$ around $p_0$ such that the coordinate representation of $f$ has the following form:
\begin{equation} \label{f}
f=-X_1^2-X_2^2-...-X_\lambda^2+X_{\lambda+1}^2+...+X_n^2+c
\end{equation}
where $c=f(p_0)$ and $p_0$ corresponds to the origin $(0,0,...,0)$.
\end{theorem}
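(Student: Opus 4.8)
The plan is to reduce the statement to a purely local problem in $\mathbb{R}^n$ and then to diagonalize, on a neighborhood of the origin, the variable-coefficient quadratic form representing $f$. First I would choose any chart $(U,\varphi)$ about $p_0$ with $\varphi(p_0)=0$, work with the coordinate representation $f\circ\varphi^{-1}$ on a convex neighborhood of $0$, and subtract the constant $c=f(p_0)$ so that I may assume $f(0)=0$. Since the target normal form \eqref{f} is invariant under affine rescalings of coordinates, it suffices to produce \emph{some} coordinate system in which $f$ becomes a nondegenerate diagonal quadratic form with entries $\pm1$.

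The key preparatory step is Hadamard's lemma, applied twice. Writing $f(x)=\int_0^1\frac{d}{dt}f(tx)\,dt$ and using $f(0)=0$ yields smooth functions $g_i$ with $f(x)=\sum_{i=1}^n x_i\,g_i(x)$; because $p_0$ is a critical point, $g_i(0)=\frac{\partial f}{\partial x_i}(0)=0$. Applying the same integral device to each $g_i$ produces smooth functions $h_{ij}$ with
\begin{equation}
f(x)=\sum_{i,j=1}^n x_i x_j\,h_{ij}(x).
\end{equation}
After replacing $h_{ij}$ by $\tfrac12(h_{ij}+h_{ji})$ I may assume the matrix $\big(h_{ij}(x)\big)$ is symmetric, and differentiating twice at $0$ identifies $\big(h_{ij}(0)\big)$ with $\tfrac12 H_f(p_0)$, which is invertible since $p_0$ is nondegenerate.

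The core of the argument is an induction modeled on Lagrange's completing-the-square, carried out with function-valued coefficients. Suppose that in some coordinates $u_1,\dots,u_n$ near $0$ one already has $f=\pm u_1^2\pm\cdots\pm u_{r-1}^2+\sum_{i,j\ge r}u_iu_j\,H_{ij}(u)$ with $(H_{ij})$ symmetric. By a preliminary \emph{linear} change in the variables $u_r,\dots,u_n$ I can arrange $H_{rr}(0)\neq0$, so that $H_{rr}$ has constant sign on a neighborhood of $0$. I then set $v_r=\sqrt{|H_{rr}(u)|}\Big(u_r+\sum_{i>r}u_i\,H_{ir}(u)/H_{rr}(u)\Big)$ and keep the remaining coordinates; this completes the square and eliminates every cross term containing $u_r$. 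Since the substitution leaves all coordinates but the $r$-th fixed and $\frac{\partial v_r}{\partial u_r}(0)=\sqrt{|H_{rr}(0)|}\neq0$, its Jacobian at $0$ is nonsingular, so the inverse function theorem certifies that $(v_1,\dots,v_n)$ is a genuine coordinate system near $0$. Iterating $n$ times gives the diagonal form, and rescaling each coordinate by the square root of its coefficient produces entries $\pm1$.

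The step I expect to be the main obstacle is the inductive one: one must verify at each stage that a linear change can force the leading coefficient $H_{rr}(0)$ to be nonzero (this is precisely where nondegeneracy is consumed and must be tracked through the induction), that the map defining $v_r$ is smooth and locally invertible, and that it leaves the already-normalized squares intact while preserving symmetry of the remaining block. Finally, to interpret $\lambda$ I would invoke Sylvester's law of inertia: the number of negative signs equals the number of negative eigenvalues of $H_f(p_0)$, which by the change-of-chart formula $\mathcal{H}_f(p_0)=J(p_0)^tH_f(p_0)J(p_0)$ of the preceding lemma is independent of the chart and hence a well-defined invariant of the critical point.
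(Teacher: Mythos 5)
The paper does not prove this theorem at all --- it simply refers the reader to Milnor for the proof --- and your argument is precisely the standard proof from that source: Hadamard's lemma applied twice to write $f=\sum x_ix_jh_{ij}(x)$ with $(h_{ij}(0))=\tfrac12H_f(p_0)$, followed by the inductive completion of the square with function-valued coefficients and an inverse-function-theorem check at each stage. Your proposal is correct, and you have rightly flagged the one point needing care (that nondegeneracy of the remaining block $(H_{ij}(0))_{i,j\ge r}$ persists through the induction, which follows because the Hessian at $0$ in the new coordinates is block diagonal with the already-normalized $\pm1$ block invertible).
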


One may refer to see \cite{Milnor1963} for the proof.
 \\
The number $\lambda$ of minus signs in the equation \eqref{f} is the number of negative diagonal entries of the matrix $H_f(p_0)$ after diagonalization. By Sylvester's law, $\lambda$ does not depend on how $H_f(p_0)$ is diagonalized. So, $\lambda$ is determined by $f$ and $p_0$.
The number $\lambda$ is called "the index of the non-degenerate critical point $p_0$". Obviously, $\lambda$ is an integer between $0$ and $n$.

Note that,

\begin{enumerate}
\item A non-degenerate critical point is isolated.
\item A Morse function on a compact manifold has only finitely many critical points \cite{Milnor1963}.
\end{enumerate}

\section{A Morse function on $SO(n)$}
In this section, we will  define a Morse function on $SO(n)$.

The set of all $n\times n$ orthogonal matrices,  $O(n)= \lbrace A=(a_{ij}) \in M_n (\mathbb{R}): AA^t=I_n \rbrace $ is a group with matrix multiplication.
 From the definition of $O(n)$,
  \begin{center}
 $detA= \pm 1$, for any $A \in O(n).$
 \end{center}

 An orthogonal matrix with determinant $1$ is called \textit{rotation matrix} and  the set of this kind of matrices is  also a group, called \textit{special orthogonal group} and denoted by $SO(n)$. On the other hand, let $S_n(\mathbb{R})$ denote the set of symmetric $ n\times n $ matrices. Since each symmetric matrix is uniquely determined by its entries on and above the main diagonal, that is a linear subspace of $M_n(\mathbb{R})$ of dimension $n(n+1)/2$.

 Now we define a function $ \varphi : GL_n(\mathbb{R}) \longrightarrow S_n(\mathbb{R})$ by $$ \varphi(A):=A^tA .$$

Then, the identity matrix $I_n$ is a regular value of $\varphi$ \cite{Lee2012}.

Let $C \in S_n(\mathbb{R})$ with entries $c_i$, with $0\leq c_1<c_2<...<c_n$ fixed real numbers and  $f_C:SO(n)\rightarrow\mathbb{R}$ be given by,
\begin{equation}
f_C(A):=<C,A>=c_1x_{11}+c_2x_{22}+...+c_nx_{nn},
\end{equation}
where $A=(x_{ij}) \in SO(n)$.

 Obviously, $f_C$ is a smooth function. Now, we will determine its critical points.

\begin{lemma} The critical points of the function $f_C$ defined above are:
\begin{equation} \label{f_C}
\begin{bmatrix}
\pm 1 &    0   & \cdots &0 \\
0     & \pm 1  & \ddots & \vdots \\
\vdots & \ddots & \ddots & 0 \\
0      & \cdots  & 0    & \pm 1
\end{bmatrix}
\end{equation}
\end{lemma}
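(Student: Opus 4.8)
The plan is to treat $f_C$ as the restriction to $SO(n)$ of the linear functional $A\mapsto\langle C,A\rangle=\operatorname{tr}(CA)$ on $M_n(\mathbb{R})$ (using that $C$ is symmetric), and to detect critical points through the tangent space of $SO(n)$. Since the paper has already noted that $I_n$ is a regular value of $\varphi(A)=A^tA$, the set $\varphi^{-1}(I_n)=O(n)$ is a regular level set, and $SO(n)$ is its open-and-closed subset on which $\det=1$, so both share the same tangent space $\ker d\varphi_A$ at a point $A$. From $d\varphi_A(V)=V^tA+A^tV$ one sees that $V\in\ker d\varphi_A$ forces $A^tV$ to be skew-symmetric, whence $T_A SO(n)=\{AX:X^t=-X\}$. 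Because $f_C$ is (the restriction of) a linear function, its differential needs no computation: $df_C(V)=\langle C,V\rangle=\operatorname{tr}(CV)$.

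First I would write down the criticality condition: $A$ is critical for $f_C$ exactly when $df_C$ annihilates $T_A SO(n)$, i.e. $\operatorname{tr}(CAX)=\operatorname{tr}\bigl((CA)X\bigr)=0$ for every skew-symmetric $X$. The elementary fact to invoke here is that $\operatorname{tr}(MX)=0$ for all skew-symmetric $X$ if and only if $M$ is symmetric, since the Frobenius pairing $\langle M,X\rangle$ only sees the skew part of $M$. Therefore the critical points are precisely those $A\in SO(n)$ for which $CA$ is symmetric.

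The key step, and the one I expect to be the heart of the matter, is to play this symmetry off against orthogonality. Since $AA^t=I_n$ and $C^t=C$, we have $(CA)(CA)^t=CAA^tC=C^2$; but $CA$ is symmetric, so this reads $(CA)^2=C^2=\diag(c_1^2,\dots,c_n^2)$. Thus the symmetric matrix $S:=CA$ commutes with $S^2=C^2$, and because $0\le c_1<\dots<c_n$ gives $c_1^2<\dots<c_n^2$, the matrix $C^2$ has pairwise distinct diagonal entries. Any matrix commuting with such a diagonal matrix is itself diagonal (from $M_{ij}(c_j^2-c_i^2)=0$), so $S=CA$ is diagonal, say $S=\diag(s_1,\dots,s_n)$ with $s_i^2=c_i^2$, i.e. $s_i=\pm c_i$.

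Finally I would descend from $CA$ to $A$ and address the one delicate point. For each index $i$ with $c_i>0$, the relation $c_i x_{ij}=s_i\delta_{ij}$ forces the $i$-th row of $A$ to equal $\pm e_i$. The main obstacle to a uniform conclusion is the degenerate case $c_1=0$, where $C$ is singular and the first row of $A$ is left undetermined by $S$; here I would use orthonormality of the rows, observing that row $1$ is a unit vector orthogonal to the already-identified rows $\pm e_2,\dots,\pm e_n$, hence equal to $\pm e_1$. This recovers exactly the diagonal $\pm1$ matrices of \eqref{f_C}. The converse is immediate, since for any such $A$ the product $CA$ is diagonal and therefore symmetric, so every diagonal $\pm1$ matrix is critical; the membership $A\in SO(n)$ merely restricts the signs to an even number of $-1$'s.
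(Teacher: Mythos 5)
Your proposal is correct, but it takes a genuinely different route from the paper's. The paper argues extrinsically with the explicit one-parameter families $AB_{ij}(\theta)$ and $B_{ij}(\theta)A$: differentiating $f_C$ along both at $\theta=0$ yields, for each pair $i<j$, the linear system $c_ix_{ij}-c_jx_{ji}=0$, $c_jx_{ij}-c_ix_{ji}=0$, whose determinant $c_j^{2}-c_i^{2}$ is nonzero, forcing $x_{ij}=x_{ji}=0$ directly; orthogonality then pins the diagonal to $\pm1$, and the converse is checked by noting that the velocity vectors of the $AB_{ij}(\theta)$ span $T_ASO(n)$ while the corresponding derivatives vanish. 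You instead identify $T_ASO(n)=\{AX: X^t=-X\}$ once and for all, translate criticality into the single clean condition that $CA$ be symmetric (which encodes only the paper's first family of equations), and then obtain diagonality not from a second family of curves but from the algebraic identity $(CA)(CA)^t=CAA^tC=C^2$ combined with the fact that a matrix commuting with $\diag(c_1^2,\dots,c_n^2)$, whose entries are pairwise distinct, must itself be diagonal. Each approach buys something: the paper's is elementary and reuses the curves it needs anyway for the Hessian computation in the next step; yours isolates a coordinate-free criticality criterion, makes the role of the hypothesis $0\le c_1<\dots<c_n$ transparent (it is exactly what makes $C^2$ have distinct diagonal entries), and explicitly handles the borderline case $c_1=0$, which the paper's hypotheses permit but which its exposition does not single out (though its $2\times2$ determinant is still nonzero there). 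Both arguments establish the same conclusion, with the caveat you correctly note that only the diagonal $\pm1$ matrices of determinant $1$ actually lie in $SO(n)$.
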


 \begin{proof}

Let $A$ be a critical point of $f_C$. Then the derivative of $f_C$ at $A$ must be zero. Consider the matrix given by a rotation of first and second coordinate $B_{12}(\theta)$ defined by
\begin{equation*}
B_{12}(\theta)=\begin{bmatrix}
cos\theta & -sin\theta &0 &\cdots & 0 \\
sin\theta & cos\theta  &0 &\cdots &0  \\
0        & 0           &1  &    &  \vdots\\
\vdots & \vdots & & \ddots & \vdots \\
0 & 0& \cdots &0& 1
\end{bmatrix}.
\end{equation*}
Then, $AB_{12}(\theta)\in SO(n)$ and the matrix $B_{12}(\theta)$ forms a curve on $SO(n)$. Moreover, $B_{12}(\theta)=A$ for $\theta=0$.

By the definition of $f_C$, and after computing the matrix product, we have
\begin{equation}
f_C(AB_{12}(\theta))=c_1(x_{11}cos\theta+x_{12}sin\theta)+c_2(-x_{21}sin\theta+x_{22}cos\theta)+c_3x_{33}+\cdots+c_nx_{nn}
\end{equation}

By differentiating $f$ in the direction of the velocity vector $\frac{d}{d\theta}AB_{12}(\theta)|_{\theta=0}$ of the curve $AB_{12}(\theta)$ at $A$, we have
\begin{equation}
\frac{d}{d\theta}f_C(AB_{12}(\theta))|_{\theta=0}= c_1x_{12}-c_2x_{21}
\end{equation}
 and
 \begin{equation}
 \frac{d}{d\theta}f_C(B_{12}(\theta)A)|_{\theta=0}= -c_1x_{21}+c_2x_{12}.
 \end{equation}

 However, by the assumption that $A$ is a critical point of $f_C$, we require these derivatives to be zero. i.e.

 \begin{equation*}
 c_1x_{12}-c_2x_{12}=0
 \end{equation*}
 \begin{equation*}
 -c_1x_{21}+c_2x_{12}=0
 \end{equation*}
 Solving this system for $ x_{12}, x_{21}$ gives $x_{12}= x_{21}= 0$. We can carry out the similar calculation for $B_{ij}(\theta)$ with $i<j$, where $B_{ij}(\theta)$ is with the entries: $(i,i)=cos\theta, (i,j)=-sin\theta, (j,i)=sin\theta$ and $(j,j)=cos\theta$. Thus, for the matrix $A$, $x_{ij}=0$ whenever $i\neq j$. So, that is, a critical point of $f_C$ is a diagonal matrix. On the other hand $A\in SO(n)$, so we have $AA^t=I_n$. So each entry on the main diagonal of $A$ must be $\pm1$.

 Conversely, let $A$ be a matrix in the form \eqref{f_C}. In order to check that $A$ is  a critical point, we need to compute the derivative of $f_C$. If we could find $n(n-1)/2$ curves $C_i$ going through $A$ with  velocity vector at $A$ and linearly independent from each other. Since the velocity vector of $C_i$ at $A$ plays a role of a local coordinate of $A$, we only need to check that the derivative of $f_C(C_i)$ vanishes to see that $Df(A)=0$. Now, the claim is the curves $C_i$'s are in fact $AB_{ij}\theta$' s defined above. Let $\epsilon_i=A_{ii}$ where $A_{ii}$ is the $i$-th diagonal entry of $A (\epsilon_i = \pm 1)$. Then, the derivative of the matrix $AB_{ij}(\theta)$ at $A$ is ( we did for the case $B_{12}$, but it is same for other indices with $i<j$),
 \begin{equation*}
 \frac{d}{d\theta}AB_{12}(\theta)|_{\theta=0}=\begin{bmatrix}
 0&-\epsilon_1 &0&\cdots &0 \\
 \epsilon_2 &0 &0 &\cdots&0 \\
 0&0 &\ddots& & 0 \\
 \vdots & \vdots & &  & \vdots \\
 0&0 &\cdots & & 0
 \end{bmatrix}
 \end{equation*}
 This matrix is regarded as a vector in $\mathbb{R}^{n^2}$. By considering all $1\leq i \leq j \leq n$, these matrices (vectors) form a basis for the tangent space $T_ASO(n)$.

 So, for a given matrix $A$ in the form (3.2), it is easy to compute that, the derivative of $f_C$ at $A$ is zero. This means nothing but $A$ is a critical point of $f_C$.
  \end{proof}

After now, we know the coordinate system of $SO(n)$ and the critical points of the the given function $f_C$. It is straightforward to compute the Hessian of $f_C$ at $A$. Suppose that $A$ is a critical matrix with diagonal entries $A_{ii}=\epsilon_i=\pm1$. Then, we want to compute
\begin{equation*}
\frac{\partial^2}{\partial\theta\partial\varphi}f_C(AB_{\alpha\beta}(\theta)B_{\gamma\delta}(\varphi))|_{\theta=0, \varphi=0}.
\end{equation*}
Notice that  is linear $AB_{\alpha\beta}(\theta)B_{\gamma\delta}(\varphi)$ is linear in $\theta$ and in $\varphi$, and $f_C$ is a linear function. Thus, we can bring the derivative inside $f_C$. So,

\begin{align*}
 \frac{\partial^2}{\partial\theta\partial\varphi}f_C(AB_{\alpha\beta}(\theta)B_{\gamma\delta}(\varphi))|_{\theta=0, \varphi=0} &= f_C(A\frac{d}{d\theta}B_{\alpha\beta}(\theta)|_{\theta=0}\frac{d}{d\varphi}B_{\gamma\delta}(\varphi)|_{\varphi=0}\\
 &= \left\{ \begin{array}{ll}
 -c_\alpha\epsilon_\alpha-c_\beta\epsilon_\beta & \textrm {if} \: \alpha=\gamma, \: \beta=\delta \\
 0 & \textrm{otherwise}
 \end{array}\right.
\end{align*}

 This calculation becomes easier if we consider the matrix multiplication $c_{ij}=\sum _ka_{ik}b_{kj}$. The calculation above shows that the Hessian matrix is diagonal. Since $c_\alpha \neq c_\beta$ for $\alpha \neq \beta$, the entries on the diagonal is non-zero. Therefore, $A$ is a non-degenerate critical point of $f_C$, meaning that $f_C$ is a Morse function on $SO(n)$.

Assume that the subscripts $i$ of the diagonal entries $\epsilon_i$ of $A$ , $1\leq i\leq n$, with $\epsilon_i=1$ are
\begin{equation*}
i_1,i_2,\cdots,i_m
\end{equation*}
in ascending order. Then the index of the critical point $A$ ( the number of minus signs on the diagonal of Hessian) is
\begin{equation*}
 (i_1-1)+(i_2-1)+\cdots+(i_m-1).
\end{equation*}
And the index is 0 if all $\epsilon_i$' s are -1. Also, the critical value at the critical point is
\begin{equation*}
2(c_{i1}+c_{i2}+\cdots+c_{im})-\sum _{i=0}^nc_i.
\end{equation*}
 Considering that $detA=1$, there are $2^{n-1}$ critical points \cite{Matsumoto2002}
 
\section{Perfect Morse Functions}
First, we will give the basic notions.
\begin{definition}
The Poincar\'e polynomial of the $n$- dimensional manifold $M$ is defined to be
\begin{equation}
P_M(t)=\sum_{k=0}^nb_k(M)t^k
\end{equation}
where $b_k(M)$ is the $k$-th Betti number of $M$.
\end{definition}

\begin{definition}
 Let $f:M\rightarrow \mathbb{R}$ be a Morse function. Then, the Morse polynomial of $f$ is defined to be
\begin{equation}
P_f(t)=\sum_{k=0}^n \mu_kt^k
\end{equation}
where $\mu_k$ is the number of critical points of $f$ of index $k$.
\end{definition}

\begin{theorem}{(The Morse Inequality)}
Let $f:M\rightarrow\mathbb{R}$ be a Morse function on a smooth manifold $M$. Then, there exists a polynomial $R(t)$ with non-negative integer coefficients such that
\begin{equation*}
P_f(t)=P_M(t)+(1+t)R(t).
\end{equation*}
\end{theorem}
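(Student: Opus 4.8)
The plan is to prove the Morse inequalities by the standard method of building $M$ up through the sublevel sets of the Morse function and tracking how the homology changes as we pass each critical level. The key tool is that a Morse function gives $M$ the structure of a CW-complex with one cell of dimension $k$ for each critical point of index $k$; equivalently, the sublevel sets $M^a=f^{-1}((-\infty,a])$ change their homotopy type only when $a$ crosses a critical value, and crossing a critical value of index $\lambda$ amounts (up to homotopy) to attaching a $\lambda$-cell. This is the fundamental theorem of Morse theory as developed in \cite{Milnor1963}, which I would invoke directly.

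First I would order the finitely many critical values $c_1<c_2<\cdots<c_r$ (finiteness is guaranteed since $M=SO(n)$ is compact), choose regular values $a_0<c_1<a_1<c_2<\cdots<a_{r-1}<c_r<a_r$ interleaving them, and set $M_j=f^{-1}((-\infty,a_j])$, so that $M_0=\emptyset$ and $M_r=M$. Second, I would introduce for each pair the subadditive quantity given by the relative Betti numbers and form the polynomial invariant $S_j(t)=\sum_k \dim H_k(M_j,M_{j-1})\,t^k$. Since each step attaches the cells corresponding to critical points at level $c_j$, the pair $(M_j,M_{j-1})$ has relative homology concentrated in the indices of those critical points, so $\sum_j S_j(t)=P_f(t)$.

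The central step, which I expect to be the main obstacle, is converting the information from the long exact sequences of the pairs $(M_j,M_{j-1})$ into the single global inequality relating $P_f$ and $P_M$. The standard device is to show that the function $P_f(t)-P_M(t)$ has non-negative coefficients and, moreover, is divisible by $(1+t)$ in the sense that the quotient $R(t)$ also has non-negative coefficients. Concretely I would use the fact that for any space built from a filtration, the alternating-sum (Euler-characteristic) relations sharpen to the polynomial Morse inequalities; the cleanest route is the lemma that if $S_\lambda$ is a subadditive polynomial-valued function on pairs that is additive on disjoint attachments, then $\sum_j S_j(t)-P_M(t)=(1+t)R(t)$ with $R$ having non-negative coefficients. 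The heart of the matter is the subadditivity coming from the long exact homology sequence of a triple, which produces exactly the factor $(1+t)$.

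To carry this out I would argue inductively on $j$: assuming $P_{M_{j-1}}(t)=\sum_k b_k(M_{j-1})t^k$ already satisfies the desired relation, I would analyze the boundary maps in the exact sequence of $(M_j,M_{j-1})$ to see that attaching the index-$\lambda$ cells either raises $b_\lambda$ by one (when the attaching map is null-homotopic in homology) or leaves $b_\lambda$ fixed while lowering $b_{\lambda-1}$ by one (when a cycle is killed). In the first case the contribution to $P_f-P_M$ is $0$; in the second it is $t^\lambda+t^{\lambda-1}=t^{\lambda-1}(1+t)$, which is manifestly divisible by $(1+t)$ with non-negative quotient. Summing these contributions over all critical points gives $P_f(t)-P_M(t)=(1+t)R(t)$ with $R(t)=\sum t^{\lambda-1}$ taken over the cancelling pairs, which has non-negative integer coefficients as required. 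One may also consult \cite{Milnor1963} or \cite{Matsumoto2002} for this classical argument.
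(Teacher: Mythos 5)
The paper does not prove this theorem at all: it states the polynomial Morse inequality and defers the proof to the cited reference \cite{Nicolaescu2011}. Your sketch is therefore not competing with an argument in the paper but with the classical one in the literature, and it is essentially that classical argument, correctly assembled. The filtration by sublevel sets interleaving the critical values, the identification $\sum_j S_j(t)=P_f(t)$ via excision and the cell-attachment theorem, and above all the final dichotomy --- each index-$\lambda$ cell either creates a generator in degree $\lambda$ or kills one in degree $\lambda-1$, so that the killing cells contribute $t^{\lambda}+t^{\lambda-1}=(1+t)t^{\lambda-1}$ to $P_f-P_M$ while the creating cells contribute $0$ --- is exactly the standard mechanism producing the factor $(1+t)$, and your closed form $R(t)=\sum t^{\lambda-1}$ over killing cells visibly has non-negative integer coefficients (note $\#\{\text{killing cells of index }0\}=0$, so no negative powers of $t$ appear). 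Two small points to tighten: the theorem as stated in the paper omits compactness, but your argument (and the statement itself) needs $M$ compact, or at least $f$ to have finitely many critical points with the sublevel-set structure theorem available, so you should state that hypothesis explicitly rather than smuggle it in via ``$M=SO(n)$''; and the create-or-kill dichotomy for a single cell requires working with field coefficients or with ranks (the boundary map $H_\lambda(M_j,M_{j-1})\to H_{\lambda-1}(M_{j-1})$ has rank $0$ or $1$ on a single cell), together with a word on why several critical points at the same level can be treated independently, which you do flag via additivity on disjoint attachments. Your first route through the subadditivity of $S_\lambda-S_{\lambda-1}+\cdots\pm S_0$ is a genuine alternative (it is Milnor's original argument), but you do not actually carry it out; the cell-counting induction you do carry out is sufficient on its own, so I would drop the subadditivity paragraph or commit to it fully.
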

One may refer to \cite{Nicolaescu2011} for proof.

A Morse function $f:M\rightarrow\mathbb{R}$ is called a perfect Morse function if $P_f(t)=P_M(t)$ \cite{Nicolaescu2011}.
 
Now, we show that the function $f_C$ on $SO(n)$ defined in the previous section is also a perfect Morse function. 
\begin{theorem}
The function 
$$
f_C: SO(n) \rightarrow \mathbb{R}, \quad f_C (A):=\langle C,A \rangle
$$
is a perfect Morse function, where $C \in S_n (\mathbb{R} )$.
\end{theorem}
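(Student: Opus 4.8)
The plan is to establish the defining identity $P_{f_C}(t)=P_{SO(n)}(t)$ head‑on: assemble the Morse polynomial from the critical‑point data of the previous section, compare it with the known Poincar\'e polynomial of $SO(n)$, and check they agree. By the Morse Inequality we always have $P_{f_C}(t)-P_{SO(n)}(t)=(1+t)R(t)$ with $R$ having non‑negative coefficients, so equality of the two polynomials is exactly the assertion of perfectness. There is no shortcut via Morse's lacunary principle here, since the Morse polynomial will turn out to have many consecutive nonzero coefficients, so the honest route is to compute both sides.

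First I would assemble the Morse polynomial. From the previous section the critical points are the diagonal sign matrices $\mathrm{diag}(\epsilon_1,\dots,\epsilon_n)$ with $\epsilon_i=\pm1$, subject to the $SO(n)$ condition $\prod_i\epsilon_i=1$, and the index of such a point is $\sum_{i:\,\epsilon_i=+1}(i-1)$. Encoding each critical point by which diagonal slots carry $+1$, this reads
\begin{equation*}
P_{f_C}(t)=\sum_{\substack{\epsilon\in\{\pm1\}^n\\ \#\{i:\,\epsilon_i=-1\}\ \mathrm{even}}} t^{\,\sum_{i:\,\epsilon_i=+1}(i-1)}.
\end{equation*}
To handle the parity (determinant) constraint I would introduce the bookkeeping polynomial $F(s)=\prod_{i=1}^{n}\bigl(s+t^{\,i-1}\bigr)$, where the summand $s$ records a $-1$ in slot $i$ and $t^{\,i-1}$ records a $+1$; then the even‑subset extraction gives $P_{f_C}(t)=\tfrac12\bigl(F(1)+F(-1)\bigr)$. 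The key observation is that $F(-1)=0$, since its $i=1$ factor is $-1+t^{0}=0$, while $F(1)=\prod_{i=1}^{n}(1+t^{\,i-1})=2\prod_{i=1}^{n-1}(1+t^{\,i})$. Hence
\begin{equation*}
P_{f_C}(t)=\prod_{i=1}^{n-1}(1+t^{\,i}).
\end{equation*}
As a sanity check, evaluating at $t=1$ returns $2^{\,n-1}$, matching the number of critical points found earlier.

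Next I would record the Poincar\'e polynomial of $SO(n)$. Using the computation of $H^{*}(SO(n);\mathbb{Z}/2)$ from \cite{Hatcher2002}, the mod‑$2$ Betti numbers assemble into $P_{SO(n)}(t)=\prod_{i=1}^{n-1}(1+t^{\,i})$; this can be checked by hand in low dimensions, e.g.\ $SO(2)\cong S^{1}$ gives $1+t$ and $SO(3)\cong\mathbb{RP}^{3}$ gives $1+t+t^{2}+t^{3}$. Comparing the two displays yields $P_{f_C}(t)=P_{SO(n)}(t)$, which is precisely the statement that $f_C$ is a perfect Morse function.

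The combinatorial computation is routine once the $F(-1)=0$ trick is in place, so the real obstacle is interpretive rather than technical: the equality holds for the Betti numbers taken with $\mathbb{Z}/2$ coefficients, i.e.\ $b_k=\dim_{\mathbb{Z}/2}H^{k}(SO(n);\mathbb{Z}/2)$, and it \emph{fails} over $\mathbb{Q}$. Already $SO(3)\cong\mathbb{RP}^{3}$ has rational Poincar\'e polynomial $1+t^{3}\neq 1+t+t^{2}+t^{3}$. Thus the one point requiring care is to fix the coefficient field as $\mathbb{Z}/2$ when invoking the Poincar\'e polynomial of $SO(n)$, after which the two polynomials coincide term by term.
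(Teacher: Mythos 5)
Your proposal is correct, and it reaches the same two target identities as the paper --- $P_{f_C}(t)=\prod_{i=1}^{n-1}(1+t^i)$ and $P_{SO(n)}(t)=\prod_{i=1}^{n-1}(1+t^i)$ --- but by a different mechanism. The paper computes the Morse polynomial by induction on $n$: it splits the critical points of $f_{C_{n+1}}$ according to whether the $(n+1)$-st diagonal entry is $+1$ or $-1$, and argues $P_{f_{C_{n+1}}}(t)=(1+t^n)P_{f_{C_n}}(t)$; it then runs a parallel induction on the basis of the exterior algebra $\wedge_{\mathbb{Z}_2}[e_1,\dots,e_{n-1}]$ to get the Poincar\'e polynomial. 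Your generating-function computation $P_{f_C}(t)=\tfrac12\bigl(F(1)+F(-1)\bigr)$ with $F(s)=\prod_{i=1}^{n}(s+t^{i-1})$ and $F(-1)=0$ gets the Morse polynomial in one stroke, and it cleanly handles the determinant constraint $\prod_i\epsilon_i=1$; this is actually an improvement, because the paper's treatment of the bottom-entry $-1$ case passes through a matrix $\tilde A\in O(n)\setminus SO(n)$ whose ``index'' is not covered by the induction hypothesis, a looseness your direct sum over even-parity sign vectors avoids. For the Poincar\'e polynomial you simply quote the product formula from \cite{Hatcher2002} where the paper re-derives it; both rest on the same external input. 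Finally, your explicit insistence that the Betti numbers be taken with $\mathbb{Z}/2$ coefficients (with the $SO(3)\cong\mathbb{RP}^3$ counterexample over $\mathbb{Q}$) is a point the paper uses implicitly --- it writes $H_*(SO(n),\mathbb{Z}_2)$ --- but never flags against its coefficient-free definition of $P_M(t)$; making it explicit is a genuine clarification rather than a deviation.
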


\begin{proof}
First we show that the Morse polynomial is,
\begin{equation} \label{Pf_C}
P_{f_C}(t)=(1+t)(1+t^2)\cdots(1+t^{n-1}).
\end{equation}
We use induction method. For making it easier, we label the function $f_C$ with $n$ as ${f_C}_n:SO(n)\rightarrow\mathbb{R}$. \\
Trivially, for $n=1$, $P_{{f_C}_1}(t)=1$ and for $n=2$, $P_{{f_C}_2}(t)=1+t$. Assume that, $P_{{f_C}_n}(t)=(1+t)(1+t^2)+\cdots+(1+t^{n-1})$. Then, we need to show that $P_{{f_C}_{n+1}}(t)$ satisfies the form \eqref{Pf_C}. \\
We may consider that $SO(n+1)$ gets all the critical points from $SO(n)$ with extra bottom entry ($(n+1)$- th diagonal entry), which is either +1 or -1. Say the set of all these points are $C_{n+1}^+$ and $C_{n+1}^-$ respectively.\\
 Let $A\in C_{n+1}^-$. Then we have $\tilde{A}\in O(n)$ such that, $A$ is the matrix $\tilde{A}$ with extra bottom entry -1. Then, by the definition of index, we obtain
\begin{equation*}
ind(A)=ind(\tilde{A}).
\end{equation*}
Thus, for the elements of $ C_{n+1}^-$ the equation \eqref{Pf_C} holds.
Let $A\in C_{n+1}^+$. Then we have $\tilde{A}\in SO(n)$ such that, $A$ is the matrix with $\tilde{A}$  with the bottom entry +1. Thus, by the definition of index, we obtain
\begin{equation*}
ind(A)=ind(\tilde{A})+n.
\end{equation*}

So, by the definition of Morse polynomial, we gain

\begin{equation}
P_{{f_C}_{n+1}}(t)=P_{{f_C}_n}(t)(1+t^n)=(1+t)(1+t^2)\cdots(1+t^{n-1})(1+t^n)
\end{equation}.

Now, we find out the Poincar\'e polynomial of $SO(n)$. The graded abelian group $H_*(SO(n),\mathbb{Z}_2)$ is isomorphic to the graded group coming from the exterior algebra
$$\wedge_{\mathbb{Z}_2}[e_1,e_2,...,e_{n-1}]
$$ \cite{Hatcher2002}.
Let say $A(n)=\wedge_{\mathbb{Z}_2}[e_1,e_2,...,e_{n-1}]$ where the degree of $e_i$, $|e_i|=i$. Then, we obtain
\begin{equation*}
|e_{i_1}\wedge e_{i_2}\wedge ...\wedge e_{i_k}|=\sum_{j=1}^k|e_{i_j}|=\sum_{j=1}^ki_j.
\end{equation*}
 If we define $a(n)_k=dim{\mathbb{Z}_2}(A(n)_k)$, then by the result in \cite{Hatcher2002}, $ a(n)_k$ is nothing but the $k$- th Betti number of $SO(n)$. Hence, the polynomial
 \begin{equation*}
 P(A(n))=\sum_{i=0}^\infty a(n)_it^i
 \end{equation*}
 is the Poincar\'e polynomial of $SO(n)$.

Now, our claim is that The Poincar\'e polynomial of $SO(n)$ is
\begin{equation*}
P(A(n))=(1+t)(1+t^2)\cdots(1+t^{n-1}).
\end{equation*}

 Let $B(A(n))$ be the basis of $A(n)$. For instance, $B(A(1))=trivial$, $B(A(2))=\lbrace 1,e_1 \rbrace $, $B(A(3))=\lbrace 1, e_1, e_2, e_1\wedge e_2 \rbrace$ etc.

 In this sense, we obtain
 \begin{equation*}
 B(A(n+1))=(B(A(n))\wedge e_n)\sqcup B(A(n)).
 \end{equation*}

We use induction method. Indeed, here we have very similar arguments with the previous claim. The variable $e_n$ has the same role with " the extra bottom entry $\pm1$". Then, we have the polynomial $P(A(n))=\sum_{b\in B(A(n))}a(n)_bt^{|b|}$. Trivially, $P(A(1))=1$ and $P(A(2))=1+t$. By the induction hypothesis, assume that
 \begin{equation*}
 P(A(n))=\sum_{b\in B(A(n))}a(n)_bt^{|b|}=(1+t)(1+t^2)\cdots(1+t^{n-1}).
 \end{equation*}
For the polynomial $P(A(n+1))$, pick an element $b\in B(A(n+1))$. Then, $b$ is in either $B(A(n))$ or $B(A(n))\wedge e_n$. For $b\in B(A(n+1))$, trivially, $P(A(n+1))$ has the desired form. If $b\in B(A(n))\wedge e_n$, then by the definition of degree, there is $\tilde{b}\in B(A(n))$ such that $|b|=|\tilde{b}|+n$. Thus, by the definition of $P(A(n))$, we obtain
\begin{equation}
P(A(n+1))=(1+t)(1+t^2)\cdots(1+t^n)
\end{equation}
which completes the proof.
 \end{proof}

Thereby, we have shown that, for the given Morse function $f_C:SO(n)\rightarrow\mathbb{R}$, $P_M(t)=P_{f_C}(t)$, meaning that $f_C$ is a perfect Morse function.

\bibliography{refs}
\bibliographystyle{plain}
\end{document}